\title{An asymptotically tight bound for \linebreak the Davenport constant}
\author{Benjamin Girard}
\keywords{Additive combinatorics, zero-sum sequences, Davenport constant, finite Abelian groups}
\subjclass[2010]{05E15, 11B30, 11B75, 11A25, 20D60, 20K01}
\thanks{Sorbonne Universit\'e, Universit\'e Paris Diderot, CNRS, Institut de Math\'ematiques de Jussieu - Paris Rive Gauche, IMJ-PRG, F-75005, Paris, France, email: \texttt{benjamin.girard@imj-prg.fr}}
\newtheorem{theorem}{Theorem}
\newtheorem{conjecture}{Conjecture}
\def\le{\leqslant}
\def\ge{\geqslant}
\begin{document}
$ $
\vspace{-1.5cm}
\maketitle \setcounter{page}{1} 

\vspace{-0.8cm}
\begin{abstract} 
We prove that for every integer $r \ge 1$ the Davenport constant $\mathsf{D}(C^r_n)$ is asymptotic to $rn$ when $n$ tends to infinity.
An extension of this theorem is also provided.
\end{abstract}

\vspace{0.4cm}
For every integer $n \ge 1$, let $C_n$ be the cyclic group of order $n$. 
It is well known that every non-trivial finite Abelian group $G$ can be uniquely decomposed as a direct product of cyclic groups 
$C_{n_1} \oplus \cdots \oplus C_{n_r}$ such that $1 < n_1 \mid \cdots \mid n_r \in \mathbb{N}$. 
The integers $r$ and $n_r$ appearing in this decomposition are respectively called the rank and the exponent of $G$.
The latter is denoted by $\exp(G)$.
For the trivial group, the rank is $0$ and the exponent is $1$.
For every integer $1 \le d \mid \exp(G)$, we denote by $G_d$ the subgroup of $G$ consisting of all elements of order dividing $d$. 

\medskip
Any finite sequence $S$ of $\ell$ elements of $G$  will be called a sequence over $G$ of length $|S|=\ell$. 
Also, we denote by $\sigma(S)$ the sum of all elements in $S$. 
The sequence $S$ will be referred to as a zero-sum sequence whenever $\sigma(S)=0$.

\medskip
By $\mathsf{D}(G)$ we denote the smallest integer $t \ge 1$ such that every sequence $S$ over $G$ of length $|S| \ge t$ contains a non-empty zero-sum subsequence.
This number, which is called the Davenport constant, drew over the last fifty years an ever growing interest, most notably in additive combinatorics and algebraic number theory. A detailed account on the many aspects of this invariant can be found in \cite{CDG16,GaoGero06,GeroRuzsa09,GeroKoch05,Narkie04}. 

\medskip
To name but one striking feature, let us recall the Davenport constant has the following arithmetical interpretation.
Given the ring of integers $\mathcal{O}_\textbf{K}$ of some number field $\textbf{K}$ with ideal class group $G$, the maximum number of prime ideals in the decomposition of an irreducible element of $\mathcal{O}_\textbf{K}$ is $\mathsf{D}(G)$ \cite{Rogers63}. 
The importance of this fact is best highlighted by the following generalization of the prime number theorem \cite[Theorem 9.15]{Narkie04},
stating that the number $F(x)$ of pairwise non-associated irreducible elements in $\mathcal{O}_\textbf{K}$ whose norms do not exceed $x$ in absolute value satisfies,
$$F(x) \underset{x \rightarrow +\infty}{\sim} C\frac{x}{\log x}(\log\log x)^{\mathsf{D}(G)-1},$$
with a suitable constant $C > 0$ depending solely on $G$ (see \cite[Chapter 9.1]{GeroKoch05} and \cite[Theorem 1.1]{Kac09} for sharper and more general results).  

\medskip
We are thus naturally led to the problem of determining the exact value of $\mathsf{D}(G)$. 
The best explicit bounds known so far are
\begin{equation}
\label{bornes Dav}
\displaystyle\sum^r_{i=1} (n_i-1)+1  \le  \mathsf{D}(G) \le n_r\left(1 + \log \frac{|G|}{n_r}\right).
\end{equation}
The lower bound follows easily from the fact that if $(e_1,\dots,e_r)$ is a basis of $G$ such that $\text{ord}(e_i)=n_i$ for all $i \in \llbracket 1,r \rrbracket$, the sequence $S$ consisting of $n_i-1$ copies of $e_i$ for each $i \in \llbracket 1,r \rrbracket$ contains no non-empty zero-sum subsequence.
The upper bound first appeared in \cite[Theorem 7.1]{EmdeBoas69'} and was rediscovered in \cite[Theorem 1]{Meshulam90}. 
See also \cite[Theorem 1.1]{Alford94} for a reformulation of the proof's original argument as well as an application of the Davenport constant to the study of Carmichael numbers.  

\medskip
$\mathsf{D}(G)$ has been proved to match the lower bound in (\ref{bornes Dav}) when $G$ is either a $p$-group \cite{Olso69a} or has rank at most $2$ \cite[Corollary 1.1]{Olso69b}. Even though there are infinitely many finite Abelian groups whose Davenport constant is known to exceed this lower bound \cite{EmdeBoas69',GeroLieb12,GeroSchneider92,Mazur92}, none of the ones identified so far either have rank $3$ or the form $C^r_n$. 
Since the late sixties, these two types of groups have been conjectured to have a Davenport constant matching the lower bound in (\ref{bornes Dav}). 
This open problem was first raised in \cite[pages 13 and 29]{EmdeBoas69'} and can be found formally stated as a conjecture in \cite[Conjecture 3.5]{GaoGero06}. 
See also \cite[Conjecture A.5]{Alon84} and \cite[Theorem 6.6]{GaoGero03} for connections with graph theory and covering problems. 

\begin{conjecture}
\label{conject}
For all integers $n,r \ge 1$,
$$\mathsf{D}(C^r_n) = r(n-1)+1.$$
\end{conjecture}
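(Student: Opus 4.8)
The plan is to prove the two inequalities separately. The lower bound is the elementary one recalled just after (\ref{bornes Dav}): choosing a basis $(e_1,\dots,e_r)$ of $C^r_n$ with each $\mathrm{ord}(e_i)=n$ and taking $n-1$ copies of every $e_i$ produces a zero-sum-free sequence of length $r(n-1)$, so $\mathsf{D}(C^r_n)\ge r(n-1)+1$. Writing $\mathsf{d}(G)=\mathsf{D}(G)-1$ for the maximal length of a zero-sum-free sequence over $G$, everything reduces to the upper bound $\mathsf{d}(C^r_n)\le r(n-1)$. I would attack it by induction on the number $\omega(n)$ of distinct prime divisors of $n$. When $\omega(n)=1$, that is $n=p^a$, the group $C^r_n$ is a $p$-group and the bound is exactly Olson's theorem \cite{Olso69a}, which supplies the base case (the trivial case $n=1$ being clear).

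For the inductive step write $n=p^a m$ with $p\nmid m$ and $\omega(m)=\omega(n)-1$, and use the Chinese remainder decomposition $C^r_n\cong H\oplus K$, where $H=C^r_{p^a}$ is the $p$-primary part and $K=C^r_m$ is the complementary, coprime summand. By Olson $\mathsf{d}(H)=r(p^a-1)$, and by the induction hypothesis $\mathsf{d}(K)=r(m-1)$. The whole argument then hinges on the single \emph{coprime-combination inequality}
$$\mathsf{d}(H\oplus K)\ \le\ \mathsf{d}(H)+\exp(H)\cdot\mathsf{d}(K),$$
since substituting the two known values gives $\mathsf{d}(C^r_n)\le r(p^a-1)+p^a\,r(m-1)=r(p^am-1)=r(n-1)$, exactly the desired bound, and iterating over the primes of $n$ closes the induction. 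Since the conjecture is a theorem for $p$-groups and for groups of rank at most $2$ \cite[Corollary 1.1]{Olso69b}, this inequality holds with equality in every settled case, which is good evidence that it is the right lemma to aim for.

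To establish the inequality I would run the standard inductive (extraction) method: given a sequence $S$ over $H\oplus K$, project it onto $K$ and repeatedly pull out minimal subsequences whose $K$-projection vanishes, each such block having sum in $H$; the collected $H$-sums then form an auxiliary sequence over $H$ to which Olson's bound is applied. The difficulty is quantitative. Carried out naively, minimal blocks have length at most $\mathsf{D}(K)$, and the bookkeeping only yields the multiplicative bound $\mathsf{d}(H\oplus K)\le \mathsf{D}(H)\mathsf{D}(K)-1$, which is far too large. Replacing the factor $\mathsf{D}(K)$ by $\exp(H)$ would require organising the blocks so that the total length they consume grows only like $\exp(H)\,\mathsf{d}(K)$ — for instance through an addition-theorem argument of Kneser or DeVos--Goddyn--Mohar type bounding how often short blocks may recur, or by globalising Olson's group-algebra computation.

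This last point is exactly where I expect the real obstacle to lie, and it is the reason the conjecture is still open. Olson's proof for $p$-groups works in the modular group algebra $\mathbb{F}_p[G]$, where $(1-g)^{\exp(G)}=0$ makes the augmentation ideal nilpotent of index $\mathsf{d}(G)+1$; this is intrinsically a characteristic-$p$ phenomenon and sees only the $p$-part of $G$. For composite $n$ there is no single field over which all the primes become visible at once, and the extremal sequence above genuinely exploits the Chinese-remainder \emph{mixing} of the coprime components through a common basis of elements of order $n$ — behaviour that neither the combinatorial extraction (which loses a multiplicative factor) nor the algebraic method (which does not globalise across primes) is presently able to capture. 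Proving the coprime-combination inequality, that is, controlling this mixing tightly, is therefore the crux on which a complete proof of Conjecture~\ref{conject} rests.
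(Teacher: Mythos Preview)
This statement is Conjecture~\ref{conject}, which the paper does \emph{not} prove; it is presented as an open problem, and the paper establishes only the asymptotic version $\mathsf{D}(C^r_n)\sim rn$ via Theorem~\ref{main result}. There is therefore no proof in the paper against which to compare your attempt.

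Your write-up is candid about not being a proof: everything hinges on the ``coprime-combination inequality'' $\mathsf{d}(H\oplus K)\le\mathsf{d}(H)+\exp(H)\,\mathsf{d}(K)$, which you leave open. The point to recognise is that this inequality, specialised to $H=C^r_{p^a}$ and $K=C^r_m$ with $p\nmid m$, is not a genuine reduction but a restatement of the conjecture. One direction is your induction on $\omega(n)$; conversely, if Conjecture~\ref{conject} holds then $\mathsf{d}(C^r_{p^am})=r(p^am-1)=r(p^a-1)+p^a\cdot r(m-1)=\mathsf{d}(H)+\exp(H)\,\mathsf{d}(K)$, so your inequality holds with equality. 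The ``lemma'' is thus equivalent in strength to the conjecture itself, and the proposal does not reduce the difficulty.

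It is worth contrasting your target with what the extraction method actually gives. Quotienting by $K$ yields $\mathsf{d}(H\oplus K)\le\exp(H)\,\mathsf{d}(K)+\eta(H)-1$; your inequality asks to replace $\eta(H)-1$ by $\mathsf{d}(H)$. For $H=C^r_{p^a}$ one has $\eta(H)-1\ge(2^r-1)(p^a-1)$ by~(\ref{min eta}) against $\mathsf{d}(H)=r(p^a-1)$, so you are requesting an improvement that is exponential in $r$ over what any known block-extraction or group-algebra argument delivers. The paper's Theorem~\ref{main result} uses precisely this weaker $\eta$-bound, choosing $H$ to be the largest Sylow subgroup so that the resulting surplus $(c_r-r)(n/P(n)-1)$ is $o(n)$; that trade-off is exactly why it obtains an asymptotic rather than an exact result.
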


Besides the already mentioned results settling Conjecture \ref{conject} for all $r$ when $n$ is a prime power and for all $n$ when $r \le 2$, note that 
$\mathsf{D}(C^3_n)$ is known only when $n=2p^\alpha$, with $p$ prime and $\alpha \ge 1$ \cite[Corollary 4.3]{EmdeBoas69}, or $n=2^\alpha 3$ with $\alpha \ge 2$ \cite[Corollary 1.5]{EmdeBoas69'}, and satisfies Conjecture \ref{conject} in both cases. 
To the best of our knowledge, the exact value of $\mathsf{D}(C^r_n)$ is currently unknown for all pairs $(n,r)$ such that $n$ is not a prime power and $r \ge 4$. 
In all those remaining cases, the bounds in (\ref{bornes Dav}) translate into
\begin{equation}
\label{bornes Dav C^r_n}
r(n-1)+1 \le \mathsf{D}(C^r_n) \le n\left(1 + (r-1)\log n\right),
\end{equation}
which leaves a substantial gap to be bridged.
Conjecture \ref{conject} thus remains wide open. 

\medskip
The aim of the present note is to clarify the behavior of $\mathsf{D}(C^r_n)$ for any fixed $r \ge 1$ when $n$ goes to infinity. 
Our main theorem proves Conjecture $1$ in the following asymptotic sense. 

\begin{theorem}
\label{equiv Dav}
For every integer $r \ge 1$,
$$\mathsf{D}(C^r_n) \underset{n \rightarrow +\infty}{\sim} rn.$$
\end{theorem}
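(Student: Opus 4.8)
The plan is to sandwich $\mathsf{D}(C^r_n)$ between $r(n-1)+1$ and $rn+o(n)$. The lower bound $r(n-1)+1 \le \mathsf{D}(C^r_n)$ is already recorded in (\ref{bornes Dav C^r_n}) and gives $\liminf_{n}\mathsf{D}(C^r_n)/n \ge r$, so the whole difficulty lies in establishing the matching upper estimate $\mathsf{D}(C^r_n) \le rn+o(n)$.

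The engine of the argument is a sharpened form of the inductive method. For a finite Abelian group $Q$, write $\eta(Q)$ for the smallest integer $t$ such that every sequence over $Q$ of length at least $t$ admits a non-empty zero-sum subsequence of length at most $\exp(Q)$. I claim that for every subgroup $H$ of a finite Abelian group $G$, with $Q=G/H$,
$$\mathsf{D}(G) \le \left(\mathsf{D}(H)-1\right)\exp(Q) + \eta(Q).$$
To prove this, take a zero-sum-free sequence $S$ over $G$ of maximal length $\mathsf{D}(G)-1$ and project it to $Q$. As long as at least $\eta(Q)$ terms remain, one repeatedly extracts from the projection a non-empty zero-sum subsequence of length at most $\exp(Q)$; this yields disjoint subsequences $T_1,\dots,T_k$ of $S$, each of length at most $\exp(Q)$, with a leftover of length at most $\eta(Q)-1$. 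Each sum $\sigma(T_j)$ lies in $H$, and since $S$ is zero-sum-free over $G$ the sequence $(\sigma(T_1),\dots,\sigma(T_k))$ must be zero-sum-free over $H$, so $k \le \mathsf{D}(H)-1$. Summing the lengths gives the claim. The crucial gain is that extracting \emph{short} zero-sums, of length $\le \exp(Q)$ and governed by $\eta$, rather than arbitrary minimal ones of length $\le \mathsf{D}(Q)$, replaces the lossy factor $\mathsf{D}(Q)$ by $\exp(Q)$, which is of the correct order of magnitude.

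Next I would specialize to $G=C^r_n$. Let $A$ be the largest prime-power divisor of $n$ and put $B=n/A$, so that $\gcd(A,B)=1$. Then the subgroup $H=G_A$ of elements of order dividing $A$ is isomorphic to $C^r_A$, while $Q=G/H$ is isomorphic to $C^r_B$ with $\exp(Q)=B$. Since $A$ is a prime power, $H$ is a $p$-group, and the equality case of (\ref{bornes Dav}) for $p$-groups \cite{Olso69a} gives $\mathsf{D}(H)=r(A-1)+1$ exactly. The inequality above then reads
$$\mathsf{D}(C^r_n) \le r(A-1)B + \eta(C^r_B) \le rn + \eta(C^r_B).$$
To finish I need $\eta(C^r_B)=o(n)$, and here I would invoke the linear bound of Alon and Dubiner: there is a constant $c_r$, depending only on $r$, such that $\eta(C^r_m) \le c_r m$ for all $m$. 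This yields $\mathsf{D}(C^r_n) \le rn + c_r\,n/A$.

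The proof then closes with one elementary remark: as $n\to+\infty$ its largest prime-power divisor $A$ also tends to infinity, since otherwise every prime power exactly dividing $n$ would be bounded and $n$ could take only finitely many values. Hence $c_r\,n/A=o(n)$, so $\mathsf{D}(C^r_n) \le rn+o(n)$, and combined with the lower bound this gives $\mathsf{D}(C^r_n)\sim rn$. I expect the genuine obstacle to be the second paragraph: realizing that one must extract bounded-length zero-sums over the quotient and therefore route the whole argument through $\eta$, together with securing a truly \emph{linear} (rather than $m\log m$) bound on $\eta(C^r_m)$ — without linearity the estimate breaks down exactly for integers $n$, such as primorials, whose largest prime-power divisor grows only like $\log n$.
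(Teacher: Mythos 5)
Your proposal is correct and takes essentially the same route as the paper: the identical inductive-method inequality $\mathsf{D}(G) \le \exp(G/H)\left(\mathsf{D}(H)-1\right) + \eta(G/H)$ applied with $H = G_{P(n)}$, Olson's theorem for the $p$-group $H$, the Alon--Dubiner linear bound on $\eta(C^r_m)$, and the elementary observation that $P(n) \to \infty$. The only difference is cosmetic: you prove the inductive inequality directly via a maximal zero-sum-free sequence, where the paper extracts the zero-sum from a long sequence and cites \cite[Lemma 5.7.10]{GeroKoch05}.
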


The proof of Theorem \ref{equiv Dav} relies on a new upper bound for $\mathsf{D}(C^r_n)$, turning out to be a lot sharper than the 
one in (\ref{bornes Dav C^r_n}) for large values of $n$. So as to state it properly, we now make the following definition. 
For every integer $n \ge 1$, we denote by $P(n)$ the greatest prime power dividing $n$, with the convention $P(1)=1$.

\begin{theorem}
\label{main result bis}
For every integer $r \ge 1$, there exists a constant $d_r \ge 0$ such that for every integer $n \ge 1$,
$$\mathsf{D}(C^r_n) \le r\left(n-1\right) + 1 + d_r\left(\frac{n}{P(n)}-1\right).$$
\end{theorem}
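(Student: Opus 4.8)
The plan is to reduce the problem to the $p$-group case via the Chinese Remainder Theorem and to control the resulting defect through \emph{short} zero-sum subsequences. Writing $q = P(n) = p^\alpha$ for the greatest prime power dividing $n$ and $k = n/q$, the maximality of $q$ forces the $p$-part of $n$ to equal $q$, hence $\gcd(q,k)=1$, so that $C_n \cong C_q \oplus C_k$ and therefore $C^r_n \cong C^r_q \oplus C^r_k$. Setting $\mathsf{d}(G)=\mathsf{D}(G)-1$, I would prove the equivalent statement that every zero-sum-free sequence over $C^r_n$ has length at most $r(n-1)+d_r(k-1)$.

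First I would fix a zero-sum-free sequence $S$ over $C^r_n$ and push it into the quotient $C^r_k$ via the projection $\rho \colon C^r_n \to C^r_k$. The idea is to partition $S$ into disjoint blocks, each zero-sum in the $C^r_k$-coordinate, using the invariant $\eta(C^r_k)$, where $\eta(G)$ denotes the least $t$ such that every sequence over $G$ of length $\ge t$ admits a nonempty zero-sum subsequence of length at most $\exp(G)$. Greedily extracting from $\rho(S)$ nonempty zero-sum subsequences of length at most $\exp(C^r_k)=k$, I obtain disjoint blocks $B_1,\dots,B_m$ together with a remainder $R$ of length at most $\eta(C^r_k)-1$. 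Each block then satisfies $|B_j|\le k$, and its sum $A_j := \sigma(B_j)$ lies in $C^r_q$, since $B_j$ is zero-sum modulo $C^r_q$.

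The crucial point is that $(A_1,\dots,A_m)$ is zero-sum-free over the $p$-group $C^r_q$: any vanishing sub-sum of the $A_j$ would lift, through the corresponding union of blocks, to a nonempty zero-sum subsequence of $S$, contradicting the hypothesis. Olson's theorem for $p$-groups \cite{Olso69a} then yields $m \le \mathsf{d}(C^r_q) = r(q-1)$. Collecting lengths gives
\[
|S| \;\le\; |R| + \sum_{j=1}^m |B_j| \;\le\; \bigl(\eta(C^r_k)-1\bigr) + r(q-1)k,
\]
and a direct computation shows that the right-hand side is at most $r(n-1)+d_r(k-1)$ precisely when $\eta(C^r_k)-1 \le (r+d_r)(k-1)$.

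It remains to exhibit such a constant $d_r$, and this is where the main obstacle lies. I would invoke the Alon--Dubiner theorem, which furnishes a constant $c_r$, depending only on $r$, with $\eta(C^r_k)\le c_r k$ for every $k\ge 1$. A short estimate then gives $\eta(C^r_k)-1 \le (2c_r-1)(k-1)$ for all $k\ge 1$ (the case $k=1$, that is $n$ a prime power, reduces directly to Olson's theorem with no defect), so that any choice $d_r \ge \max(0,\,2c_r-1-r)$ works. The delicate ingredient is thus the linear-in-$k$ bound on $\eta(C^r_k)$: it is exactly this that keeps the extracted blocks short enough for the factor $r(q-1)$ coming from the $p$-group not to be inflated by the full Davenport constant of $C^r_k$, which would only return the far weaker product-type estimate.
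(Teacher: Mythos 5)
Your proof is correct and is essentially the paper's own argument: the same inductive method, splitting $C^r_n$ into the $p$-group $C^r_{P(n)}$ (handled by Olson's theorem \cite{Olso69a}) and the quotient $C^r_{n/P(n)}$ (handled by the Alon--Dubiner bound \cite{Alodub95} on $\eta$), merely phrased via the CRT splitting and zero-sum-free sequences instead of the subgroup $H=G_{P(n)}$ and the bound $\mathsf{D}(G) \le \exp(G/H)(\mathsf{D}(H)-1)+\eta(G/H)$. The only difference is cosmetic: quoting Alon--Dubiner as $\eta(C^r_k)\le c_r k$ rather than $\eta(C^r_k)\le c_r(k-1)+1$ costs you the lossier constant $d_r=\max(0,\,2c_r-1-r)$ where the paper takes $d_r=c_r-r$, which is immaterial for the statement as claimed.
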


The relevance of this bound to the study of the Davenport constant is due to the fact that the arithmetic function $P(n)$ tends to infinity when $n$ does so.
Indeed, if we denote by $\mathcal{P}$ the set of prime numbers and let $(a_n)_{n \ge 1}$ be the sequence defined for every integer $n \ge 1$ by 
$$a_n = \prod_{p \in \mathcal{P}} p^{\left\lfloor \frac{\log n}{\log p} \right\rfloor},$$
we easily notice that, for every integer $N \ge 1$, one has $P(n) > N$ as soon as $n > a_N$. 

\medskip
Now, since $P(n)$ tends to infinity when $n$ does so, 
Theorem \ref{main result bis} allows us to deduce that, 
for every integer $r \ge 1$, the gap between the Davenport constant and its conjectural value  
$$\mathsf{D}(C^r_n) - \left(r\left(n-1\right) + 1\right)$$
is actually $o(n)$.
This theorem will be obtained via the inductive method, which involves another key combinatorial invariant we now proceed to define.

\medskip
By $\eta(G)$ we denote the smallest integer $t \ge 1$ such that every sequence $S$ over $G$ of length $|S| \ge t$ contains a non-empty zero-sum subsequence $S' \mid S$ with $|S'| \le \exp(G)$. 
It is readily seen that $\mathsf{D}(G) \le \eta(G)$ for every finite Abelian group $G$. 

\medskip
A natural construction shows that, for all integers $n,r\ge 1$, one has 
\begin{equation}
\label{min eta}
(2^r-1)(n-1)+1 \le \eta(C^r_n).
\end{equation}
Indeed, if $(e_1,\dots,e_r)$ is a basis of $C^r_n$, it is easily checked that the sequence $S$ consisting 
of $n-1$ copies of $\sum_{i \in I} e_i$ for each non-empty subset $I \subseteq \llbracket 1,r \rrbracket$ contains no non-empty zero-sum subsequence of length at most $n$.

\medskip
The exact value of $\eta(C^r_n)$ is known to match the lower bound in (\ref{min eta}) for all $n$ when $r \le 2$ \cite[Theorem 5.8.3]{GeroKoch05}, and for all $r$ when $n=2^\alpha$, with $\alpha \ge 1$ \cite[Satz 1]{Harborth73}. 
Besides these two results, $\eta(C^r_n)$ is currently known only when $r=3$ and $n=3^\alpha5^\beta$, with $\alpha,\beta \ge 0$ \cite[Theorem 1.7]{GaoSchmid06}, in which case $\eta(C^3_n)=8n-7$, or $n=2^\alpha 3$, with $\alpha \ge 1$ \cite[Theorem 1.8]{GaoSchmid06}, in which case $\eta(C^3_n)=7n-6$. 
When $n=3$, note that the problem of finding $\eta(C^r_3)$ is closely related to the well-known cap-set problem, and that for $r \ge 4$, the only known values so far are $\eta(C^4_3)=39$ \cite{Pellegrino71}, $\eta(C^5_3)=89$ \cite{EdelFerret02} and $\eta(C^6_3)=223$ \cite{Potechin08}.
For more details on this fascinating topic, see \cite{EdelGero06,EllenGijs16} and the references contained therein.   

\medskip
In another direction, Alon and Dubiner showed \cite{Alodub95} that when $r$ is fixed, $\eta(C^r_n)$ grows linearly in the exponent $n$.
More precisely, they proved that for every integer $r \ge 1$, there exists a constant $c_r > 0$ such that for every integer $n \ge 1$, 
\begin{equation}
\label{maj eta}
\eta\left(C^r_n\right) \le c_r(n-1)+1.
\end{equation}
From now on, we will identify $c_r$ with its smallest possible value in this theorem. 

\medskip
On the one hand, it follows from (\ref{min eta}) that $c_r \ge 2^r-1$, for all $r \ge 1$. 
Since, as already mentioned, $\eta(C_n)=n$ and $\eta(C^2_n)=3n-2$ for all $n \ge 1$, it is possible to choose $c_1=1$ and $c_2=3$, with equality in (\ref{maj eta}). 

\medskip
On the other hand, the method used in \cite{Alodub95} yields $c_r \le \left( cr\log r\right)^r$, where $c > 0$ is an  absolute constant, and it is conjectured in \cite{Alodub95} that there actually is an absolute constant $d > 0$ such that $c_r \le d^r$ for all $r \ge 1$. 

\medskip
We can now state and prove our first technical result, which is the following.

\begin{theorem}
\label{main result}
For all integers $n,r \ge 1$,
$$\mathsf{D}(C^r_n) \le r\left(n-1\right) + 1 + (c_r-r)\left(\frac{n}{P(n)}-1\right).$$
\end{theorem}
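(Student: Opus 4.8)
The plan is to prove this by the inductive method, peeling off the largest prime-power factor of $n$, on which $\mathsf{D}$ is known exactly, and charging the expensive constant $c_r$ only to the complementary factor. Write $q = P(n)$ and $m = n/P(n)$. Since $q$ is the \emph{greatest} prime power dividing $n$, it exactly divides $n$, so $\gcd(q,m)=1$ and hence $C_n \cong C_q \oplus C_m$, giving $G := C^r_n \cong C^r_q \oplus C^r_m$. Concretely I would take the subgroup $H := mG$, which is isomorphic to $C^r_q$ and satisfies $G/H \cong C^r_m$ with $\exp(G/H)=m$. The virtue of this splitting is that $C^r_q$ is a $p$-group, so the exact value $\mathsf{D}(C^r_q)=r(q-1)+1$ is available, while the quotient $C^r_m$ is controlled by the hypothesis $\eta(C^r_m) \le c_r(m-1)+1$.

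Set $B := r(n-1)+1+(c_r-r)(m-1)$. A direct computation rewrites this as $B = (k-1)m + \bigl(c_r(m-1)+1\bigr)$, where $k := r(q-1)+1 = \mathsf{D}(C^r_q)$ and $(k-1)m = r(n-m)$. I would then fix any sequence $S$ over $G$ with $|S| \ge B$ and extract blocks through the canonical projection $\phi : G \to G/H$: as long as the untouched part of $S$ has length at least $\eta(C^r_m) \le c_r(m-1)+1$, its image under $\phi$ contains a non-empty zero-sum subsequence of length at most $\exp(G/H)=m$, which pulls back to a non-empty subsequence $T \mid S$ with $|T| \le m$ and $\phi(\sigma(T))=0$, i.e. $\sigma(T) \in \ker\phi = H$.

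The arithmetic is arranged so that this extraction iterates exactly $k$ times. After removing $j-1$ disjoint blocks, each of length at most $m$, the untouched length is at least $|S|-(j-1)m \ge B-(k-1)m = c_r(m-1)+1 \ge \eta(C^r_m)$ for every $j \le k$, so a $k$-th block is always available. This yields pairwise disjoint non-empty blocks $T_1,\dots,T_k \mid S$ with each $\sigma(T_i) \in H \cong C^r_q$. Regarding $(\sigma(T_1),\dots,\sigma(T_k))$ as a sequence of length $k = \mathsf{D}(C^r_q)$ over $C^r_q$, its definition produces a non-empty index set $I$ with $\sum_{i\in I}\sigma(T_i)=0$; then $\prod_{i\in I} T_i$ is a non-empty zero-sum subsequence of $S$, and therefore $\mathsf{D}(G)\le B$, as desired.

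Every individual step is routine once the factorization is fixed, and I expect the only delicate point to be the bookkeeping: one must check that $B$ is chosen precisely so that the guaranteed untouched length after $k-1$ extractions is still at least $\eta(C^r_m)$, and that $k$ block sums are exactly what $\mathsf{D}(C^r_q)$ demands. The main conceptual obstacle, and the source of the whole gain, is the choice of splitting itself: by isolating the largest prime-power factor $C^r_q$ one may use the \emph{exact} value $r(q-1)+1$ there rather than a bound of order $c_r q$, so that only the smaller cofactor $m=n/P(n)$ incurs the constant $c_r$. This is precisely what forces the correction term to be $(c_r-r)\bigl(n/P(n)-1\bigr)$ instead of something of order $c_r n$.
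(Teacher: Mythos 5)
Your proof is correct and follows essentially the same route as the paper: the same inductive method with the same subgroup (your $H = mG$ coincides with the paper's $H=G_{P(n)} \simeq C^r_{P(n)}$, since $\gcd(P(n), n/P(n))=1$), Olson's exact value $\mathsf{D}(C^r_{P(n)})=r(P(n)-1)+1$ for the $p$-group part, the Alon--Dubiner bound on $\eta$ of the quotient, and the identical counting $\mathsf{D}(G) \le \exp(G/H)\left(\mathsf{D}(H)-1\right)+\eta(G/H)$. The only cosmetic difference is that you carry out the block-extraction bookkeeping explicitly, where the paper also cites \cite[Lemma 5.7.10]{GeroKoch05}.
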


\begin{proof}[Proof of Theorem \ref{main result}]
We set $G=C^r_n$ and denote by $H=G_{P(n)}$ the largest Sylow subgroup of $G$. 
Since $H \simeq C^r_{P(n)}$ is a $p$-group, it follows from \cite{Olso69a} that
$$\mathsf{D}(H) = r(P(n)-1)+1.$$
In addition, since the quotient group $G \slash H \simeq C^r_{n \slash P(n)}$ has exponent $n \slash P(n)$ and rank at most $r$, it follows from (\ref{maj eta}) that
$$\eta(G \slash H) \le c_r\left(\frac{n}{P(n)}-1\right)+1.$$
Now, from any sequence $S$ over $G$ such that
$$|S| \ge \exp(G \slash H) \left(\mathsf{D}(H) -1 \right) + \eta(G \slash H),$$
one can sequentially extract at least $d = \mathsf{D}(H)$ disjoint non-empty subsequences $S'_1,\dots,S'_{d} \mid S$ such that $\sigma(S'_i) \in H$ and $|S'_i| \le \exp(G \slash H)$ for every $i \in \llbracket 1,d \rrbracket$ (see for instance \cite[Lemma 5.7.10]{GeroKoch05}).
Since $T=\prod^d_{i=1} \sigma(S'_i)$ is a sequence over $H$ of length $|T|=\mathsf{D}(H)$, there exists a non-empty subset $I \subseteq \llbracket 1,d \rrbracket$ such that $T'=\prod_{i \in I} \sigma(S'_i)$ is a zero-sum subsequence of $T$. Then, $S'=\prod_{i \in I} S'_i$ is a non-empty zero-sum subsequence of $S$.

\medskip
Therefore, we have
\begin{eqnarray*}
\mathsf{D}(G) & \le & \exp(G \slash H) \left( \mathsf{D}(H) -1 \right) + \eta(G \slash H) \\
     & \le &  \frac{n}{P(n)}\left(r(P(n)-1)\right) + c_r\left(\frac{n}{P(n)}-1\right) +1 \\
     & = & r(n-1) +1 + (c_r-r)\left(\frac{n}{P(n)}-1\right),
\end{eqnarray*}
which completes the proof.
\end{proof}

Note that Theorem \ref{main result} is sharp for all $n$ when $r=1$ and for all $r$ when $n$ is a prime power. 
Also, Theorems \ref{equiv Dav} and \ref{main result bis} are now direct corollaries of Theorem \ref{main result}.

\begin{proof}[Proof of Theorem \ref{main result bis}]
The result follows from Theorem \ref{main result} by setting $d_r=c_r-r$. 
\end{proof}

\begin{proof}[Proof of Theorem \ref{equiv Dav}]
Since $P(n)$ tends to infinity when $n$ does so, the desired result follows easily from (\ref{bornes Dav C^r_n}) and Theorem \ref{main result bis}.
\end{proof}

To conclude this paper, we would like to offer a possibly useful extension of our theorems to the following wider framework.  
Given any finite Abelian group $L$ and any integer $r \ge 1$, we consider the groups defined by
$L^r_n = L \oplus C^r_n$, where $n \ge 1$ is any integer such that $\exp(L)  \mid n$.
Note that if $L$ is the trivial group, then $L^r_n \simeq C^r_n$ whose Davenport constant is already covered by Theorems \ref{equiv Dav}-\ref{main result}.

\medskip
Our aim in this more general context is to prove that, for every finite Abelian group $L$ and every integer $r \ge 1$, 
$\mathsf{D}(L^r_n)$ behaves asymptotically in the same way it would if $L$ were trivial.
To do so, we establish the following extension of Theorem~\ref{main result}.

\begin{theorem}
\label{main result general form}
Let $L\simeq C_{n_1} \oplus \cdots \oplus C_{n_{\ell}}$, with $1 < n_1 \mid \cdots \mid n_{\ell} \in \mathbb{N}$, be a finite Abelian group. 
For every integer $n \ge 1$ such that $\exp(L) \mid n$ and every integer $r \ge 1$,
$$\mathsf{D}(L^r_n) \le r\left(n-1\right) + 1 + (c_{\ell+r}-r)\left(\frac{n}{P(n)}-1\right) + \frac{n}{P(n)} \displaystyle\sum^{\ell}_{i=1} (\gcd(n_i,P(n))-1).$$
\end{theorem}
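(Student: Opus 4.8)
The plan is to mimic the proof of Theorem~\ref{main result} verbatim, replacing $C^r_n$ by $G = L^r_n \simeq C_{n_1} \oplus \cdots \oplus C_{n_{\ell}} \oplus C^r_n$ and carefully tracking how the extra cyclic factors coming from $L$ affect the two invariants $\mathsf{D}(H)$ and $\eta(G/H)$ that feed into the inductive method. Write $P(n)=p^{\alpha}$ with $p$ prime, and set $H=G_{P(n)}$, the subgroup of elements whose order divides $P(n)$. Since the subgroup of elements of order dividing $p^{\alpha}$ in a cyclic group $C_m$ with $m \mid n$ is $C_{\gcd(m,P(n))}$, and since $\exp(L) \mid n$ guarantees $n_i \mid n$ for all $i$, one obtains
$$H \simeq C_{\gcd(n_1,P(n))} \oplus \cdots \oplus C_{\gcd(n_{\ell},P(n))} \oplus C^r_{P(n)}.$$
Each $\gcd(n_i,P(n))$ divides $p^{\alpha}$ and is therefore a power of $p$, so $H$ is a finite Abelian $p$-group, already displayed as a direct sum of cyclic $p$-groups. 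Hence Olson's theorem \cite{Olso69a} applies and gives
$$\mathsf{D}(H) = 1 + \sum^{\ell}_{i=1} (\gcd(n_i,P(n))-1) + r(P(n)-1),$$
where the factors with $p \nmid n_i$ contribute $0$, consistently with $\gcd(n_i,P(n))-1=0$.

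Next I would analyse the quotient. From $C_m/(C_m)_{P(n)} \simeq C_{m/\gcd(m,P(n))}$ for each $m \mid n$ one gets
$$G/H \simeq C_{n_1/\gcd(n_1,P(n))} \oplus \cdots \oplus C_{n_{\ell}/\gcd(n_{\ell},P(n))} \oplus C^r_{n/P(n)}.$$
The one point genuinely requiring a short verification is that $\exp(G/H) = n/P(n)$. Writing $n = p^{\alpha}M$ with $p \nmid M$ and $p^{b_i}$ for the exact power of $p$ dividing $n_i$, one has $\gcd(n_i,P(n)) = p^{b_i}$ because $b_i \le \alpha$; therefore $n_i/\gcd(n_i,P(n))$ is the prime-to-$p$ part of $n_i$, which divides $M = n/P(n)$. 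Thus every cyclic factor of $G/H$ has order dividing $n/P(n)$, so $\exp(G/H) = n/P(n)$ and the rank of $G/H$ is at most $\ell+r$. The Alon--Dubiner bound (\ref{maj eta}) then yields
$$\eta(G/H) \le c_{\ell+r}\left(\frac{n}{P(n)}-1\right)+1.$$

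Finally, the inductive method applies exactly as in Theorem~\ref{main result}: from any sequence $S$ over $G$ of length at least $\exp(G/H)(\mathsf{D}(H)-1)+\eta(G/H)$ one sequentially extracts $\mathsf{D}(H)$ disjoint non-empty subsequences of length at most $\exp(G/H)$ whose sums lie in $H$ (see \cite[Lemma 5.7.10]{GeroKoch05}), and a zero-sum subsequence of the resulting sequence over $H$ produces a non-empty zero-sum subsequence of $S$. This gives $\mathsf{D}(G) \le \exp(G/H)(\mathsf{D}(H)-1)+\eta(G/H)$. Substituting the three quantities above and using the elementary simplification $\frac{n}{P(n)} \cdot r(P(n)-1) = rn - r\frac{n}{P(n)}$ rearranges the right-hand side into the claimed bound.

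I expect no serious obstacle, since the argument is structurally identical to that of Theorem~\ref{main result}. The only genuinely new ingredients are the computation of $\mathsf{D}(H)$ via Olson, which simply accumulates the extra terms $\gcd(n_i,P(n))-1$, and the verification that the factors inherited from $L$ do not inflate the exponent of $G/H$ beyond $n/P(n)$; the latter is the most delicate step and is exactly what forces the rank parameter in the Alon--Dubiner constant to be $\ell+r$ rather than $r$.
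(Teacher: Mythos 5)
Your proposal is correct and follows essentially the same route as the paper's own proof: the same choice $H=G_{P(n)}$, Olson's theorem for the $p$-group $H$, the embedding of $G/H$ into $C^{\ell+r}_{n/P(n)}$ to invoke the Alon--Dubiner bound, and the identical inductive step via \cite[Lemma 5.7.10]{GeroKoch05}. The only difference is cosmetic: you spell out the verification that $\exp(G/H)=n/P(n)$ and that each $\gcd(n_i,P(n))$ is the exact $p$-part of $n_i$, details the paper leaves implicit.
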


\begin{proof}[Proof of Theorem \ref{main result general form}]
We set $G=L^r_n$ and $H=G_{P(n)}$. 
On the one hand, since $H \simeq C_{n'_1} \oplus \cdots \oplus C_{n'_{\ell}} \oplus C^r_{P(n)}$, with $n'_i = \gcd(n_i,P(n)) \mid n_i$ for all $i \in \llbracket 1,\ell \rrbracket$ and $1 \le n'_1 \mid \cdots \mid n'_{\ell} \mid P(n)$, is a $p$-group, it follows from \cite{Olso69a} that
$$\mathsf{D}(H) = \displaystyle\sum^{\ell}_{i=1} (n'_i-1) + r(P(n)-1)+1.$$
On the other hand, since the quotient group $G \slash H$ has exponent $n \slash P(n)$ and rank at most $\ell+r$, it follows from (\ref{maj eta}) that
$$\eta(G \slash H) \le \eta\left(C^{\ell+r}_{\frac{n}{P(n)}}\right) \le c_{\ell + r}\left(\frac{n}{P(n)}-1\right)+1.$$
Therefore, the same argument we used in our proof of Theorem \ref{main result} yields
\begin{eqnarray*}
\mathsf{D}(G) & \le & \exp(G \slash H) \left( \mathsf{D}(H) -1 \right) + \eta(G \slash H) \\
     & \le &  \frac{n}{P(n)}\left(\displaystyle\sum^{\ell}_{i=1} (n'_i-1)+r(P(n)-1)\right) + c_{\ell + r}\left(\frac{n}{P(n)}-1\right) +1 \\
     & = & r\left(n-1\right) + 1 + (c_{\ell + r}-r)\left(\frac{n}{P(n)}-1\right) + \frac{n}{P(n)} \displaystyle\sum^{\ell}_{i=1} (n'_i-1),
\end{eqnarray*}
which is the desired upper bound.
\end{proof}

Theorem \ref{main result general form} now easily implies the following generalization of Theorem \ref{equiv Dav}.

\begin{theorem}
\label{general equiv Dav}
For every finite Abelian group $L$ and every integer $r \ge 1$,
$$\mathsf{D}(L^r_n) \underset{\substack{n \rightarrow +\infty \\ \exp(L) \mid n}}{\sim} rn.$$
\end{theorem}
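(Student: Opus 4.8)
The plan is to sandwich $\mathsf{D}(L^r_n)$ between two quantities both asymptotic to $rn$, in exactly the way Theorem \ref{equiv Dav} was deduced from Theorem \ref{main result bis}. Throughout, $L$ (and hence $\ell$, the integers $n_1,\dots,n_\ell$, and $\exp(L)$) is held fixed while $n$ ranges over the multiples of $\exp(L)$ tending to infinity. For the lower bound, I would first note that since $\exp(L)=n_\ell \mid n$, the decomposition $L^r_n \simeq C_{n_1} \oplus \cdots \oplus C_{n_\ell} \oplus C^r_n$ has orders forming the divisibility chain $n_1 \mid \cdots \mid n_\ell \mid n \mid \cdots \mid n$, so it is a genuine invariant-factor decomposition. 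The lower bound in (\ref{bornes Dav}) then gives
$$\mathsf{D}(L^r_n) \ge \sum_{i=1}^{\ell} (n_i-1) + r(n-1) + 1 \ge r(n-1)+1.$$

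For the upper bound, I would apply Theorem \ref{main result general form} and show that its last two summands are both $o(n)$. The coefficient $c_{\ell+r}-r$ depends only on $L$ and $r$, so the term $(c_{\ell+r}-r)\left(\frac{n}{P(n)}-1\right)$ is $O\!\left(\frac{n}{P(n)}\right)$. In the remaining term, each factor satisfies $\gcd(n_i,P(n))-1 \le n_i-1$, so
$$\frac{n}{P(n)} \sum_{i=1}^{\ell} (\gcd(n_i,P(n))-1) \le \frac{n}{P(n)} \sum_{i=1}^{\ell} (n_i-1),$$
where the sum on the right is a constant depending only on $L$; this term is therefore also $O\!\left(\frac{n}{P(n)}\right)$. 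Since $P(n) \to \infty$ as $n \to \infty$, as recorded just after Theorem \ref{main result bis}, one has $\frac{n}{P(n)} = o(n)$, so both error terms are negligible. Combining the two estimates yields $r(n-1)+1 \le \mathsf{D}(L^r_n) \le rn + o(n)$, and dividing by $n$ and letting $n \to \infty$ gives $\mathsf{D}(L^r_n) \sim rn$.

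I expect no serious obstacle here: the entire content is that the $L$-dependent correction $\frac{n}{P(n)}\sum_{i=1}^\ell (\gcd(n_i,P(n))-1)$ stays $o(n)$. The only point deserving care is that, because $L$ is held fixed, the coefficient $\sum_{i=1}^\ell (\gcd(n_i,P(n))-1)$ is bounded uniformly in $n$; it is precisely this boundedness, together with $P(n)\to\infty$, that makes the correction negligible relative to $n$. Were $L$ instead allowed to grow with $n$, this term could dominate and the stated asymptotic would break down.
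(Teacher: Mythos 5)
Your proposal is correct and follows essentially the same route as the paper: the lower bound comes from the left-hand side of (\ref{bornes Dav}) applied to the invariant-factor decomposition of $L^r_n$, and the upper bound from Theorem \ref{main result general form} together with the observation $\gcd(n_i,P(n)) \le n_i$ and the fact that $P(n) \to \infty$, so the correction terms are $o(n)$. The paper's proof is just a terser version of exactly this argument.
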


\begin{proof}[Proof of Theorem \ref{general equiv Dav}] 
We write $L \simeq C_{n_1} \oplus \cdots \oplus C_{n_{\ell}}$, with $1 < n_1 \mid \cdots \mid n_{\ell} \in \mathbb{N}$.
For every integer $n \ge 1$ such that $\exp(L) \mid n$, one has $\gcd(n_i,P(n)) \le n_i$ for all $i \in \llbracket 1,\ell \rrbracket$. 
Since $P(n)$ tends to infinity when $n$ does so, the result follows easily from (\ref{bornes Dav}) and Theorem \ref{main result general form}.
\end{proof}

 \section*{Acknowledgements}
 The author is grateful to W.A. Schmid for his careful reading of the manuscript in an earlier version.

\end{document}